\newtheorem{theorem}{Theorem}
\newtheorem{lemma}{Lemma}
\newtheorem{claim}{Claim}
\title{Covering Paths and Trees for Planar Grids}
\author{
Bal\'azs Keszegh\thanks{Research supported by Hungarian National Science Fund (OTKA), under grant PD 108406 and under grant NN 102029 (EUROGIGA project GraDR 10-EuroGIGA-OP-003) and the J\'anos Bolyai Research Scholarship of the Hungarian Academy of Sciences.}
\\ 
\small {\it Hungarian Academy of Sciences,
Alfr\'ed R\'enyi Institute}\\[-0.8ex]
\small {\it of Mathematics, P.O.B. 127, Budapest H-1364, Hungary}\\
}
\begin{document}

\maketitle

\begin{abstract}
Given a set of points in the plane, a \emph{covering path} is a polygonal path
that visits all the points. In this paper we consider covering paths of the vertices of an $n \times m$ grid. We show that the minimal number of segments of such a path is $2\min(n,m)-1$ except when we allow crossings and $n=m\ge 3$, in which case the minimal number of segments of such a path is $2\min(n,m)-2$, i.e., in this case we can save one segment. In fact we show that these are true even if we consider covering trees instead of paths.

These results extend previous works on axis-aligned covering paths of $n\times m$ grids and complement the recent study of covering paths for points in general position, in which case the problem becomes significantly harder and is still open.
\end{abstract}

\section{Introduction}  \label{sec:intro}
In this paper we study polygonal paths visiting a finite set of points
in the plane.
A \emph{spanning path} is a directed Hamiltonian path
drawn with straight line edges. Each edge in the path connects two
of the points, so a spanning path can only turn at one of the given
points. Every spanning path of a set of $n$ points consists of $n-1$
segments. A \emph{covering path} is a directed polygonal path in the
plane that visits all the points. A covering path can make a turn at
any point, i.e., either at one of the given points or at a (chosen) Steiner point.
Obviously, a spanning path for a point set $V$ is also a covering path for $V$.
If no three points in $V$ are collinear, every covering path consists of at
least $\lceil n/2\rceil$ segments. A \emph{minimum-segment} or \emph{minimum-link} covering path
for $V$ is one with the smallest number of segments (links).
A point set is said to be in \emph{general position} if no three
points are collinear.

As a generalization of the well-known puzzle of linking $9$ dots in a $3 \times 3$ grid with a
polygonal path having only $4$ segments~\cite{Lo1914}, Mori\'c~\cite{Mo11} proposed the study of covering paths for point sets in general position.
In this direction, in \cite{coveringpaths} the trivial lower and upper bounds of $n/2$ and $n-1$ where improved, if we allow crossings, then the upper bounds gets close to $n/2$:

\begin{theorem}\cite{coveringpaths}\label{T1}
Every set of $n$ points in the plane admits a (possibly self-crossing)
covering path consisting of $n/2 +O(n/\log{n})$ line segments.
\end{theorem}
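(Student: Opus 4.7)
The lower bound $k\ge\lceil n/2\rceil$ is immediate: since $V$ is in general position, each line of the plane meets $V$ in at most two points, so every segment of a covering path covers at most two points of $V$, giving $k\ge n/2$.

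For the upper bound I would build the covering path from a carefully chosen perfect matching of $V$. Partition $V$ into $n/2$ pairs $\{p_i,q_i\}$ and let $\ell_i$ be the line through the $i$-th pair; by general position $\ell_i\cap V=\{p_i,q_i\}$. My plan is to order the pairs as $P_1,\ldots,P_{n/2}$ and form a polygonal path whose $i$-th segment $s_i$ lies along $\ell_i$, contains both $p_i$ and $q_i$, and meets its neighbors at the Steiner intersection points $\ell_i\cap\ell_{i\pm 1}$. If every transition is obstruction-free, the resulting path has exactly $n/2$ segments. The only possible local obstruction at pair $i$ is that $\ell_i\cap\ell_{i+1}$ falls in the interior of the segment $[p_i,q_i]$ on $\ell_i$ (or the analogous interval on $\ell_{i+1}$); each such bad transition forces one extra connector segment to be inserted.

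The problem thus reduces to choosing a matching together with an ordering whose number of obstructions is $O(n/\log n)$. I would attack this by an Erd\H{o}s--Szekeres style extraction combined with greedy peeling: repeatedly find a sub-family of pair-lines of size $\Omega(\log n)$ that can be traversed in a single obstruction-free chain, remove its points from further consideration, and iterate. After at most $O(n/\log n)$ rounds, fewer than $O(n/\log n)$ points remain, and these are handled by a short spanning tail contributing an additional $O(n/\log n)$ segments. Summing, the total is $n/2+O(n/\log n)$.

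The main obstacle I expect is the geometric selection step: proving that among any set of pair-lines in general position one can always extract an obstruction-free chain of length $\Omega(\log n)$. This likely hinges on an Erd\H{o}s--Szekeres or first-selection type argument applied to the intersection pattern of the pair-lines with a suitably chosen transversal, translating the ``intersection lies outside $[p_i,q_i]$'' condition into a monotonicity property that admits a long monotone subsequence of logarithmic size.
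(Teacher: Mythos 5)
Note first that this paper does not prove Theorem~\ref{T1} at all: it is quoted from \cite{coveringpaths}, so your attempt has to be measured against the argument given there. Your high-level plan --- cover the points by segments through matched pairs and join consecutive pair-lines at Steiner turning points, so that every ``good'' transition costs no connector --- is the right general shape, and your lower-bound remark is harmless (though not part of the statement, which does not assume general position and only asserts an upper bound). The genuine gap is exactly the step you flag yourself: the claim that one can always extract an obstruction-free chain of $\Omega(\log n)$ pair-lines is asserted, not proved, and it is where the entire content of the theorem lies. Moreover, your formulation of the obstruction is too local: for the segment of the path lying on $\ell_i$ to cover $p_i$ and $q_i$, \emph{both} points must lie between the two turning points $\ell_{i-1}\cap\ell_i$ and $\ell_i\cap\ell_{i+1}$, so the condition couples three consecutive lines, not two (even if each intersection avoids the interior of $[p_i,q_i]$, the two turning points may land on the same side of the pair, and the segment then misses it). With a matching fixed in advance it is not at all clear that long good chains exist, so the reduction ``matching first, then order the lines'' is itself in doubt.

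In \cite{coveringpaths} the difficulty is resolved by choosing the pairs adaptively inside structured subsets rather than fixing a matching up front. By the Erd\H{o}s--Szekeres theorem, any $m$ points in general position contain $\Omega(\log m)$ points in convex position (large collinear subsets only help, since one segment covers any number of collinear points), and $k$ points in convex position admit a covering path with about $k/2+O(1)$ segments: pair up consecutive vertices of the convex polygon and zigzag along the extended chords, whose supporting lines meet outside the hull on the correct sides, so every turn is at a Steiner point beyond both points of the current pair and no connectors are needed. One then repeatedly peels off such a convex subset (of size $\Omega(\log n)$ as long as, say, at least $\sqrt{n}$ points remain), covers it with half its size plus $O(1)$ segments, spends $O(1)$ further segments to attach it to the path built so far, and finishes the last $O(\sqrt n)$ points with an arbitrary spanning path; since there are $O(n/\log n)$ rounds, the total is $n/2+O(n/\log n)$. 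To complete your write-up you would have to prove your extraction lemma, and the natural way to do so is precisely this convex-position route; as stated (arbitrary matching plus a hoped-for monotone subsequence of pair-lines) it remains an unproven claim, so the proposal falls short of a proof.
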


However, for noncrossing paths none of the trivial bounds are asymptotically best:
\begin{theorem}\cite{coveringpaths} \label{paththm}
Every set of $n$ points in the plane admits a noncrossing covering path
with at most  $\lceil (1-1/601080391)n\rceil-1$ segments. There exist $n$-element point sets that
require at least $(5n-4)/9$ segments in any noncrossing covering path.
\end{theorem}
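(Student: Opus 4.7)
The theorem has two independent claims: an upper bound saving a tiny positive fraction of segments over the trivial spanning path ($n-1$ segments), and a lower bound exhibiting point sets forcing roughly $5n/9$ segments. I would tackle them separately.

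For the upper bound, the plan is to isolate a constant-size ``saving gadget''---a pattern of $k$ points in general position---that must occur as a sub-configuration inside any sufficiently large point set in general position, and whose $k$ points can be covered by a local path with at most $k-2$ segments in a way that is compatible with any noncrossing covering path of the remaining $n-k$ points. Such gadgets can then be extracted iteratively on disjoint subsets via an Erd\H{o}s--Szekeres or Ramsey-type argument; each iteration contributes a unit saving over the spanning-path baseline. The enormous denominator $601080391$ in the stated constant presumably reflects the size of the Ramsey-type guarantee needed to force a single gadget, together with the case analysis required to stitch the local path to the outside without introducing crossings.

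For the lower bound, I would construct a nine-point gadget in convex position whose only collinear pairs are so constrained that no straight segment can be ``reused'' to cover many of its points, and prove that any noncrossing covering path must use at least five segments to cover the gadget. Placing roughly $n/9$ well-separated copies of this gadget in general position, I would then run a charging argument: each segment of a hypothetical noncrossing covering path of the whole set is charged to the gadget on which it performs its covering work, and one shows that each gadget must receive at least five charges, yielding the bound $(5n-4)/9$ up to the additive endpoint correction.

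The principal obstacle in both parts is the noncrossing condition. For the upper bound, local savings have to survive global combinatorial constraints, which forces a very restrictive choice of gadget geometry and a delicate connection scheme with the surrounding path---this is where the loss in the explicit constant is expected to be the largest. For the lower bound, the difficulty is ruling out segments that cleverly traverse several gadgets at once to save covering work simultaneously; placing the gadgets far apart and keeping the whole union in general position is precisely what makes the charging argument local enough to go through.
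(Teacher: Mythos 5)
This statement is not proved in the paper at all: Theorem~\ref{paththm} is quoted verbatim from \cite{coveringpaths} as background, so there is no internal proof to compare against; any assessment has to be against the argument in that reference. Your upper-bound plan is in the right spirit of what is actually done there: the point set is processed in blocks of constant size, one segment is saved per block, and the constant $601080391=\binom{32}{16}+1$ is indeed an Erd\H{o}s--Szekeres-type threshold (it guarantees a large subset in convex position, i.e.\ a cup or cap, inside every block; collinear triples, which the theorem does not exclude, give a saving even more cheaply), with the stitching done so that the local saving survives the noncrossing requirement. As written, though, this part of your proposal is a strategy rather than a proof: the gadget, the local saving lemma, and the connection scheme are exactly the content that has to be supplied.

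The lower-bound part has a concrete gap. If your gadgets are $9$-point configurations in convex position and the union is in general position, then every segment covers at most two points, and the only thing a purely local charging can give is $\lceil 9/2\rceil=5$ segments per gadget \emph{provided no segment touches two gadgets} --- which yields at best $n/2$, not $5n/9$, once sharing is possible. And sharing cannot be ruled out by separation: a single straight segment can pass through one point of gadget $A$ and one point of gadget $B$ no matter how far apart they are placed, since any two points span a line; general position only forbids a third collinear point. Under fractional charging such shared segments contribute $1/2$ to each gadget, and the bound collapses to $n/2<5n/9$. To beat $n/2$ (which is essentially attainable when crossings are allowed, by Theorem~\ref{T1}), the argument must exploit the noncrossing condition and the connectivity of the path \emph{globally}: one has to show that the geometry of the construction forces many segments that cover at most one point (e.g.\ connector segments whose two-point coverings would create crossings or disconnect the traversal order). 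This is precisely what the construction and analysis in \cite{coveringpaths} are designed to do, and it is the step your sketch replaces with the assertion that ``each gadget must receive at least five charges,'' which does not follow from the stated setup.
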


The problem considered in this paper is another natural generalization of the $3\times 3 $ puzzle, we consider covering paths of the vertices of an $n\times m$ grid. For simplicity, we only consider orthogonal equally spaced (unit distance) grids (which we simply call unit grids), although most of our results hold in more general settings. A non-unit grid denotes a not necessarily equally spaced orthogonal grid. Note that from now on by a grid we also mean the planar point set which is equal to the set of vertices of the grid. If we allow only axis-aligned paths (all segments are vertical or horizontal), then the minimal number of segments is $2n+1$ for an $n\times n$ grid and $2\min(n,m)-1$ for an $n\times m$ grid \cite{KKM94}. There are various other upper and lower bounds on the minimum number of segments needed in an axis-aligned path traversing an $n$-element point set in $\mathbb{R}^d$ ~\cite{BBD+08,CM98,Co04,KKM94}. For more related results we refer to the introduction of \cite{coveringpaths}.

We solve the case when we allow segments of any direction, the study of which was proposed by D. P\'alv\" olgyi \cite{dompc}. The results of this paper are the following.
\begin{theorem}\label{thmpath}
The minimum number of segments of a (possibly self-crossing) path covering an $n\times m$ grid is $2\min(n,m)-2$ if $n= m\ge 3$ and $2\min(n,m)-1$ otherwise. On the other hand, the minimum number of segments of a noncrossing covering path is always $2\min(n,m)-1$.
\end{theorem}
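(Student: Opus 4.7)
I would establish the upper bounds by explicit constructions and the lower bounds by a counting argument that classifies segments according to their direction and exploits the polygonal-path structure; the delicate point is matching the exact bounds in the few edge cases where the naive counting is one short.

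\textbf{Upper bounds.} Assume without loss of generality $n\le m$. For $n<m$, the axis-aligned boustrophedon (``snake'') path uses $n$ horizontal and $n-1$ unit vertical segments, giving $2n-1$ segments. For $n=m\ge 3$ with crossings allowed, I would generalize the classical 9-dot puzzle solution: for $n=3$ this is the four-segment construction using one horizontal, one vertical, and two $\pm 1$-slope diagonals that cross at the center of the grid. For $n=m\ge 4$, I would combine the same boundary-and-diagonal trick with an inner boustrophedon covering the off-diagonal points, obtaining a self-crossing path with $2n-2$ segments in total.

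\textbf{Lower bounds.} Call each segment of a covering path $P$ \emph{horizontal}, \emph{vertical}, or \emph{oblique}; a horizontal segment carrying a grid point must lie on an integer row (else it is removable), a vertical on an integer column, and an oblique segment contains at most one grid point per row and at most one per column. Let $h$ be the number of horizontal segments of $P$ and $H\le h$ the number of distinct rows they occupy. Since two consecutive segments cannot be collinear, horizontal segments of $P$ are separated by at least one non-horizontal segment each, giving $s\ge 2h-1$ whenever $h\ge 1$; in particular $H=n$ yields $s\ge 2n-1$. If $H<n$, then some row $R$ carries no horizontal segment and its $m$ grid points force $s-h\ge m$, hence $s\ge H+m$. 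A symmetric analysis with parameters $v,V$ applies to columns. Combining these bounds with the observation that the only oblique lines of the $n\times m$ grid containing $\min(n,m)$ collinear grid points are the two $\pm 1$-slope diagonals (this requires $n=m$), a case analysis over $(h,H,v,V)$ yields $s\ge 2\min(n,m)-1$ in every situation except when $n=m$ and $P$ contains a very specific all-oblique configuration, in which case the two long diagonals must cross in the interior of the grid.

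\textbf{Main obstacle.} The crux is the case analysis establishing the lower bound. For $n<m$, even when some row lacks a horizontal segment, the naive counting can be one short of $2n-1$; the fix uses a finer accounting of how horizontal, vertical and oblique segments alternate along $P$. For the noncrossing $n=m$ case, I must rule out any $(2n-2)$-segment configuration: such a path would have to contain both main diagonals traversed in full (they are the only oblique lines with $n$ grid points), so the two diagonals would cross in the interior of the grid, violating the noncrossing hypothesis. The hardest part is excluding configurations with partial diagonals joined at Steiner points --- showing that such configurations cannot cover all $n^2$ grid points in $2n-2$ segments --- and I anticipate this to require a careful geometric argument about how an oblique noncrossing path enters and exits each row of the $n\times n$ grid.
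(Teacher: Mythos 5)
Your upper-bound constructions are the standard ones and agree with the paper's. The lower bound, however, has a genuine gap in exactly the case that carries all the difficulty: when some row contains no horizontal segment \emph{and} some column contains no vertical segment. Writing $o$ for the number of oblique segments, your row/column counts give $v+o\ge{}$(points in that row) and $h+o\ge{}$(points in that column); adding them yields only $s\ge n+m-o$, which is vacuous when $o$ is large, and the promised repair (``a finer accounting of how horizontal, vertical and oblique segments alternate along $P$'') is never specified. The paper closes this case with an idea absent from your plan: the rows with no horizontal segment and the columns with no vertical segment intersect in a $k\times l$ subgrid that must be covered \emph{entirely} by oblique segments; each oblique segment meets the $2k+2l-4$ border points of that subgrid in at most two points, so there are at least $k+l-2$ oblique segments, and adding the $m-l$ horizontal and $n-k$ vertical segments guaranteed in the remaining rows and columns gives $s\ge n+m-2$, which already exceeds $2\min(n,m)-2$ always and $2\min(n,m)-1$ whenever $n\ne m$ (Lemmas~\ref{lemma:nonaligned} and~\ref{lemma:bound}). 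Note that this argument uses only connectedness, not the path structure, which is why the paper gets the tree strengthening for free.

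Your route to the noncrossing lower bound for $n=m$ also starts from a false premise: a $(2n-2)$-segment covering path need \emph{not} contain both main diagonals in full. The classical $9$-dot solution itself uses one full diagonal and one oblique segment through only two grid points, so the claim ``both diagonals must appear and hence cross'' cannot be established, and the configurations you defer to a ``careful geometric argument'' (partial diagonals joined at Steiner points) are not a corner case but the entire problem. The paper instead improves the border count by one in the noncrossing case: call a border point of the oblique subgrid \emph{wasteful} if its covering segment contains no other border point; if no corner is wasteful, the segment through the top-left corner must reach the bottom row or the rightmost column, and then the segment through the bottom-left (resp.\ top-right) corner would have to cross it unless that corner is wasteful. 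This yields $k+l-1$ oblique segments and hence $s\ge n+m-1\ge 2\min(n,m)-1$. As it stands, your proposal establishes the upper bounds and the easy lower-bound cases, but the two key steps above are missing or based on an incorrect structural claim.
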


The upper bound constructions of Theorem \ref{thmpath} are not new results, the general upper bound $2\min(n,m)-1$ is rather trivial (and folklore) and for the case $n= m\ge 3$ see e.g. Puzzle $141.$ in \cite{puzzlebook}.

Theorem \ref{thmpath} shows that by using segments of any direction, we cannot win anything or we can win at most one segment compared to using only axis-aligned segments. We can also consider covering trees instead of covering paths, in which case we can count the number of edges of the tree or the number of segments of the tree. When counting the edges, it turns out that we still cannot do better than with paths, thus as a path is also a tree, we get a strengthening of Theorem \ref{thmpath}:

\begin{theorem}\label{thmtree}
The minimum number of edges of a (possibly self-crossing) tree covering an $n\times m$ grid is $2\min(n,m)-2$ if $n= m\ge 3$ and $2\min(n,m)-1$ otherwise. On the other hand, the minimum number of edges of a noncrossing covering tree is always $2\min(n,m)-1$.
\end{theorem}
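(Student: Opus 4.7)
The upper bound in each statement of Theorem \ref{thmtree} is inherited directly from Theorem \ref{thmpath}: every covering path with $k$ segments is a covering tree with $k$ edges, so those constructions serve equally well as covering trees. What remains is the matching lower bounds on $|E(T)|$ for an arbitrary covering tree $T$; I will assume throughout that $n \le m$.

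The plan is to combine two kinds of arguments: a \emph{connectivity trick} and a \emph{coverage count}. Call an edge of $T$ row-horizontal if it lies on the supporting line of some row of the grid, and define column-vertical analogously. First I would show that if every row carries at least one row-horizontal edge of $T$, then $|E(T)| \ge 2n-1$. The reason is that the $n$ row-lines are pairwise parallel, so the $\ge n$ row-horizontal edges (one per row) are vertex-disjoint in $T$; contracting every horizontal edge of $T$ collapses them into $\ge n$ distinct super-vertices, and the resulting contracted tree still needs at least $n-1$ non-horizontal edges to be connected. The symmetric statement for columns yields $|E(T)| \ge 2m-1 \ge 2n-1$ whenever every column contains a vertical edge.

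In the remaining case, fix a row $R_0$ with no horizontal edge of $T$ and a column $C_0$ with no vertical edge. Each of the $m$ grid points of $R_0$ must be covered by a distinct non-horizontal edge (each such edge meets $R_0$ in at most one grid point), so there are at least $m$ non-horizontal edges; similarly at least $n$ non-vertical edges. Writing $H, V, D$ for the numbers of horizontal, vertical, and diagonal edges, this gives $V+D \ge m$ and $H+D \ge n$, hence $|E(T)| \ge n+m-D$. This already settles every regime in which $D$ is small, and for the square case $n=m$ it reads $|E(T)| \ge 2n-D$, which meets the target $2n-2$ precisely when $D\le 2$ — exactly the configuration of Puzzle~141 in \cite{puzzlebook}.

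The main obstacle is the diagonal-heavy regime $D \ge 3$ (needed to rule out the false saving of more than one segment in the square case) and, above this, the stronger noncrossing bound $2n-1$. To handle this I plan to iterate the contraction/connectivity trick on the subforest of diagonal edges, exploiting the fact that any non-axis-parallel line in the $n \times m$ grid contains at most $\min(n,m)=n$ grid points, so each diagonal edge covers at most $n$ grid points and spans at most $n$ rows and $n$ columns; combined with a refined coverage count across $R_0\cup C_0$, this should tighten the bound $|E(T)| \ge n+m-D$ into $|E(T)| \ge 2n-1$ generically and $|E(T)| \ge 2n-2$ in the square crossing case. The hardest step will be the noncrossing version: there I would argue that the saving configuration essentially requires a pair of crossing diagonals (as in the $3 \times 3$ puzzle), so that a noncrossing tree is forced to use one extra edge and the bound is pushed back up to $2n-1$.
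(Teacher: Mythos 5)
Your upper bounds and the first (connectivity) case are fine and coincide with the paper's argument: one axis-parallel edge per row, vertex-disjointness of parallel edges in a tree, and connectedness give $2\min(n,m)-1$ whenever every row (or every column) carries such an edge. The genuine gap is in the remaining case. There the only bound you actually prove is $|E(T)|\ge n+m-D$, obtained by double-counting the diagonal edges against a single bad row $R_0$ and a single bad column $C_0$; this is vacuous precisely in the regime the theorem is about, since a covering tree may consist almost entirely of non-axis-aligned edges, so $D$ is not a priori bounded. Everything beyond that point --- the case $D\ge 3$, the bound $2\min(n,m)-1$ for $n\ne m$, the small cases $n,m\le 2$ (note that your remark that $D\le 2$ ``meets the target $2n-2$'' is inconsistent with the claimed value $2\min(n,m)-1$ for $n=m=2$), and above all the noncrossing bound --- is only announced (``I plan to iterate\dots'', ``this should tighten\dots'', ``I would argue\dots'') and never carried out. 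In particular, the assertion that any saving forces a pair of crossing diagonals is unproven, and it is not the mechanism by which the actual obstruction works.

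The paper avoids your double count by taking not one bad row and column but the set $R_1$ of \emph{all} rows containing no point covered by a horizontal segment and the set $C_1$ of all columns containing no point covered by a vertical segment; their intersection is a (non-unit) $k\times l$ subgrid every point of which is covered by non-axis-aligned segments only. A non-axis-aligned segment meets the border of that subgrid in at most two points, so at least $k+l-2$ diagonal segments are needed (Lemma \ref{lemma:nonaligned}), and these are counted \emph{in addition to} the $m-l$ horizontal and $n-k$ vertical segments forced in the remaining rows and columns, giving $|E(T)|\ge (k+l-2)+(m-l)+(n-k)=n+m-2$, which already yields $2\min(n,m)-1$ whenever $n\ne m$. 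The noncrossing improvement comes from a ``wasteful corner'' argument inside that subgrid: if the segment through one corner reaches a second border point, then the segment through an adjacent corner cannot reach any other border point without being axis-aligned or crossing the first, so the count rises to $k+l-1$ and hence to $n+m-1\ge 2\min(n,m)-1$. You would need to supply arguments of this strength to close the diagonal-heavy and noncrossing cases; as it stands, your outline is a plan rather than a proof.
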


However, if we count the number of segments of the tree, we can finally win something, as it is easy to see that we need exactly $\min(n,m)+1$ segments:

\begin{claim}\label{claimtree}
The minimum number of segments of a (possibly self-crossing or noncrossing) tree covering an $n\times m$ grid is $\min(n,m)+1$, except when $n=1$ or $m=1$ or $n=m=2$ in which cases it is $\min(n,m)$.
\end{claim}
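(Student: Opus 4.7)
Assume $n \leq m$. The plan is a matching ``comb'' upper bound and a short double counting lower bound, plus a small case analysis to rule out equality in the generic case.

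For the upper bound, outside the exceptional cases I would take the $n$ horizontal rows together with a single vertical segment that meets all of them; this comb is a tree with $n+1=\min(n,m)+1$ segments covering $V$. If $n=1$, the single row is a one-segment tree, and if $n=m=2$, the two diagonals of the unit square cross at its center and cover the four grid points, giving a tree with $2=\min(n,m)$ segments. Each construction is admissible in both the self-crossing and noncrossing settings, since the interior crossings of segments can be regarded as vertices of the tree.

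For the lower bound, the key observation is that any line meets $V$ in at most $\max(n,m)=m$ points, because a non-horizontal line meets each row at most once. Summing over the segments $L_1,\dots,L_k$ of a covering tree,
\[
km \;\geq\; \sum_{i=1}^k |L_i \cap V| \;\geq\; |V|=nm,
\]
so $k \geq n=\min(n,m)$. Outside the exceptional cases, suppose for contradiction that $k=n$. Equality forces each $L_i$ to contain exactly $m$ grid points and each grid point to lie on exactly one $L_i$. For $n<m$ every such line is a row, and $n$ parallel rows are disconnected. For $n=m \geq 3$, the lines with $n$ grid points are exactly the $n$ rows, $n$ columns, and two main diagonals; since any two of these from different families (row/column/diagonal) intersect at a grid point, the no-double-cover condition forces all $L_i$ to be rows, or all columns --- both of which give a disconnected set. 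This contradicts that $T$ is a tree, hence $k \geq n+1$.

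The only mildly subtle step is the enumeration of lines through exactly $n$ grid points in the square case $n=m$ and the verification that any two from different families share a grid point. Both are routine, and they also explain why $n=m=2$ is genuinely exceptional: the two diagonals there happen to share no grid point and together cover $V$.
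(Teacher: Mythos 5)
Your proposal is correct and takes essentially the same route as the paper: the same comb construction for the upper bound, the same count of at most $\max(n,m)$ grid points per segment, the same classification of the extremal lines as rows, columns and the two main diagonals, and the same use of connectivity of the tree to kill the equality case (the paper phrases this as two intersecting extremal segments necessarily sharing a grid point, whereas you argue that an exact cover forces all segments into one parallel family, which is disconnected --- the same mechanism in contrapositive form). The only small touch-ups needed are to note explicitly that in the square case the segments cannot all be diagonal (there are only two such lines, covering at most $2n<n^2$ points for $n\ge 3$), and to record the trivial lower bounds in the exceptional cases $n=1$ and $n=m=2$ (one segment cannot cover four non-collinear points), both of which the paper also dispatches in a sentence.
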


Although for simplicity we considered only orthogonal unit grids, these results are almost true for arbitrary grids. Specifically, from our proofs it follows that even if the grid is non-orthogonal and non-unit (i.e., the grid-lines are not necessarily equally spaced), for noncrossing covering paths/trees the answer remains the same; for crossing paths/trees when $n= m\ge 3$ does not hold, the answer again remains the same; finally, if $n= m\ge 3$, then we may or may not win $1$ segment, depending on the point set, but we cannot win more than $1$ segment. Also, the minimum number of segments of a covering tree remains $\min(n,m)+1$.

The proofs of the results are contained in the next section.

\section{Proofs}\label{sec:proofs}
We denote the $n\times m$ orthogonal unit grid by $V(n,m)$, where $n$ is the number of columns and $m$ is the number of rows. An $n\times m$ orthogonal grid which is not necessarily a unit-grid is denoted by $V'(n,m)$. Theorem \ref{thmpath} and \ref{thmtree} follow from the following lemmas.

As we noted in Section \ref{sec:intro}, the first part of the following lemma is trivial and the second part also appears in the literature \cite{puzzlebook}. For self-containedness we include its short proof.
\begin{lemma}\label{lemma:constr}
There exists a noncrossing path with $2\min(n,m)-1$ segments covering $V(n,m)$. If $n= m\ge 3$ then there exists a (self-crossing) path with $2\min(n,m)-2$ segments covering $V(n,m)$.
\end{lemma}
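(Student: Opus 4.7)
The plan for the first part is standard. Without loss of generality assume $m \le n$, so $\min(n,m) = m$. I would use the boustrophedon (``snake'') path: traverse row $0$ horizontally from $(0,0)$ to $(n-1,0)$, step up by a unit vertical to $(n-1,1)$, traverse row $1$ backwards to $(0,1)$, step up to $(0,2)$, and so on, alternating direction each row. This produces $m$ horizontal row-segments and $m-1$ unit vertical connectors, giving $2m-1 = 2\min(n,m)-1$ segments. Since the horizontal segments live in disjoint horizontal strips and each vertical connector is a unit segment in one of the two outer columns, no two segments cross.

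For the second part, the plan is to exhibit an explicit covering path with $2n-2$ segments, generalizing the classical $9$-dot puzzle solution. In the base case $n=3$, the four segments $(0,0) \to (3,0) \to (0,3) \to (0,0) \to (3,3)$---row $0$ extended, a slope-$(-1)$ diagonal picking up $(2,1)$ and $(1,2)$, column $0$, and the main diagonal extended to $(3,3)$---cover all nine grid points using three Steiner points just outside the grid. For $n \ge 4$, I would chain together extended row and column segments with diagonal segments (of slopes $\pm 1$ and, as $n$ grows, other rational slopes), letting the path turn at further Steiner points just outside the grid; the one-segment saving over the snake comes from reusing a diagonal as both a connector between rows and a device that picks up several interior grid points along the way.

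The main obstacle is arranging the segments into a single continuous path while covering all $n^2$ grid points. Consecutive segments must meet at a common endpoint (a grid point or a Steiner point just outside the grid), and each segment must extend far enough along its supporting line to contain every grid point on it. For $n=4$, for instance, the ``obvious'' six-segment cover by the four boundary lines and the two diagonals does hit every grid point, but it cannot be threaded into a path: after traversing the border one always ends at a corner from which neither diagonal is reachable by a single further segment. An explicit path that overcomes this, using more carefully chosen diagonals and Steiner points, is given as Puzzle~$141$ of~\cite{puzzlebook}, and the proof of the lemma amounts to exhibiting and verifying it.
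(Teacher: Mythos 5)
Your first half is correct and is exactly the paper's construction: the boustrophedon path with $m$ row segments and $m-1$ connectors in the two outer columns, giving $2\min(n,m)-1$ noncrossing segments.

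The second half, however, has a genuine gap: for $n=m\ge 4$ you never actually produce a covering path with $2n-2$ segments. You verify $n=3$, then describe a strategy only vaguely (``chain together extended row and column segments with diagonal segments \ldots of slopes $\pm1$ and, as $n$ grows, other rational slopes''), and finally defer entirely to Puzzle~141 of the cited book, conceding that ``the proof of the lemma amounts to exhibiting and verifying it'' --- which is precisely the part your write-up does not do. The missing idea is simple and needs no slopes other than $\pm1$: combine your $3\times 3$ solution with an \emph{axis-aligned spiral}, which is what the paper does (its Figure~\ref{fig:spiral} shows the $5\times5$ case). Spiral inward, each segment sweeping one entire outermost row or column of the not-yet-covered rectangle; after $2n-6$ such segments an uncovered $3\times3$ block remains, and the spiral's endpoint lies on the line supporting the first segment of the nine-dot solution for that block (that first segment may start arbitrarily far back along its row), so you finish with $4$ more segments, for a total of $(2n-6)+4=2n-2$. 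Since the spiral peels one line at a time, the parity of $n$ is irrelevant, and your $n=4$ worry disappears: for example
$(0,0)\to(3,0)\to(3,3)\to(-1,3)\to(2,0)\to(2,3)\to(0,1)$
covers the $4\times4$ grid with $6$ segments (two spiral arms, then the nine-dot trick on the remaining $3\times3$ block $\{0,1,2\}\times\{1,2,3\}$). With this explicit scheme (or the paper's equivalent outward spiral around the $3\times3$ core) the second statement is proved for all $n\ge3$; as written, your argument establishes it only for $n=3$.
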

\begin{proof}
For a noncrossing covering path with $2\min(n,m)-1$ there are several different constructions, we show an axis-aligned covering path. Wlog. assume $n\ge m$, for each row take a segment from its leftmost point to its rightmost point. Connect these segments to form a path by connecting every second point in the first and last columns (in the first column we connect the odd pairs, in the last column the even pairs). See Figure \ref{fig:worm}(a).

If $n= m\ge 3$, there are again several different constructions for a covering path with $2\min(n,m)-2$ segments. One possibility is extending the well-known solution for the $3\times 3$ grid to bigger grids by an axis-aligned spiral. Figure \ref{fig:spiral} shows this for the $5\times 5$ grid, from which the interested reader can easily construct it for bigger grids, we omit the exact definition of the construction.
\end{proof}

To prove the lower bounds, we first prove a lemma about covering a grid with non-axis-aligned segments (which not necessarily form a tree or path).

\begin{figure}
    \centering
    \includegraphics[height=4cm]{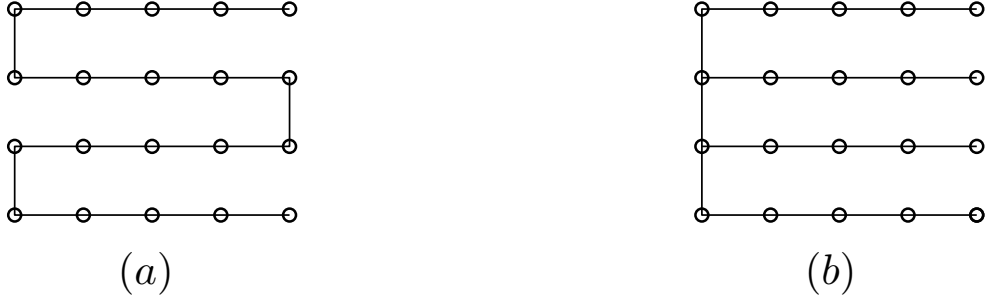}
   \caption{(a) noncrossing covering path with $2\min(n,m)-1$ segments (b) noncrossing covering tree with $\min(n,m)+1$ segments}
   \label{fig:worm}
\end{figure}

\begin{lemma}\label{lemma:nonaligned}
Any covering of a not necessarily unit grid $V'(k,l)$ with a set of segments, none of them horizontal nor vertical, has at least $k+l-2$ segments.
If $k=1$ or $l=1$ or the set of segments is noncrossing (segments may have common endpoints), then the set has at least $k+l-1$ segments.
\end{lemma}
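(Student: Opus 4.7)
The plan is to reduce the lower bound to a counting argument on the ``boundary'' grid points of the bounding box $\mathcal{B}=[x_1,x_k]\times[y_1,y_l]$ of $V'(k,l)$, where $x_1<\cdots<x_k$ and $y_1<\cdots<y_l$ are the column and row coordinates. Let $B=R_1\cup R_l\cup C_1\cup C_k$ denote the grid points lying on $\partial\mathcal{B}$. The first step is to show that any non-axis-aligned segment $s$ contains at most two points of $B$: the supporting line of $s$ is parallel to no side of $\mathcal{B}$, so it meets $\mathcal{B}$ in a chord with exactly two endpoints on $\partial\mathcal{B}$, while every other point of the chord lies in the open interior of $\mathcal{B}$ and hence outside $B$.

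Assuming $k,l\ge 2$ we have $|B|=2k+2l-4$, so dividing by $2$ immediately gives the general bound of $k+l-2$ segments. The degenerate case $k=1$ (and symmetrically $l=1$) is handled directly: the grid is a single column of $l$ points, and any non-axis-aligned segment is in particular non-vertical, so it contains at most one of them, forcing $l=k+l-1$ segments.

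For the noncrossing case with $k,l\ge 2$, I would argue by contradiction. Suppose $k+l-2$ noncrossing non-axis-aligned segments cover $V'(k,l)$. Summing the per-segment bound yields $\sum_s |s\cap B|\le 2(k+l-2)=|B|$, while every point of $B$ must be covered at least once, so every inequality is tight: each segment contains exactly two points of $B$, each point of $B$ lies on exactly one segment, and the chords $s\cap\mathcal{B}$ realize a perfect matching of $B$ by pairwise noncrossing straight chords with no shared endpoints.

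The main obstacle, and the place where the noncrossing hypothesis is actually used, is to derive a contradiction from this matching. The plan is to invoke the standard fact that any noncrossing perfect matching of points in convex position must contain a chord between two cyclically consecutive points: take the chord with the smallest cyclic distance between its endpoints; if that distance were at least $2$, the points on the shorter arc would have to be matched internally, producing a strictly shorter chord, a contradiction. Since $B$ lies in convex position on the rectangle $\partial\mathcal{B}$, the matching contains a chord between two cyclically consecutive points of $B$. But any such pair shares a side of $\mathcal{B}$, even when the pair straddles a corner (the corner itself lies on both of its adjacent sides), and points on a common side share a row or column. That shortest chord is therefore horizontal or vertical, contradicting the non-axis-aligned hypothesis and yielding the sharper bound $k+l-1$.
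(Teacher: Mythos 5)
Your proof is correct, and while its first half matches the paper, the second half takes a genuinely different route. The counting part (the $2k+2l-4$ border points, the observation that a non-axis-aligned line meets the boundary of the bounding rectangle in at most two points, and the separate treatment of $k=1$ or $l=1$) is exactly the paper's argument. For the noncrossing refinement, however, the paper argues locally: it calls a border point \emph{wasteful} if the segment through it covers no other border point, notes that a single wasteful point already forces $\lceil 1+(2k+2l-5)/2\rceil=k+l-1$ segments, and then shows that if the top-left corner is not wasteful, its segment must reach the bottom row or the rightmost column, which forces the segment through the adjacent corner (say the bottom-left one) to be wasteful, since otherwise that segment would be axis-aligned or would cross the first one. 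You instead argue globally by contradiction: tightness of the count turns the covering into a perfect matching of the border points by pairwise disjoint, non-axis-aligned chords, and the extremal ``shortest chord'' argument produces a chord joining two cyclically consecutive border points, which necessarily shares a side of the rectangle and is therefore horizontal or vertical. One point deserves an explicit sentence in a written-up version: the border points are only in \emph{weakly} convex position (they are collinear along each side of the rectangle), so the standard convex-position matching lemma does not apply verbatim; it does apply here because every matching chord is non-axis-aligned, hence its supporting line meets $\partial\mathcal{B}$ in exactly its two endpoints, which is precisely what the arc-separation step of your minimality argument needs. With that caveat filled in, your route is sound; it is somewhat longer and requires the tightness discussion, but it isolates a reusable principle about noncrossing matchings of points on a convex curve, whereas the paper's wasteful-point argument is shorter and more elementary.
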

\begin{proof}
If $k=1$ or $l=1$ then trivially a non-axis-aligned segment can cover at most one point from $V'(k,l)$ and thus we need $\max(k,l)=k+l-1$ segments.

If $k,l\ge 2$ then consider only the top and bottom rows and leftmost and rightmost columns, these contain altogether $2k+2l-4$ points. We say that these points are on the border of the grid. The intersection of these rows and columns define $4$ corner points. As any non-axis-aligned segment can cover at most two points of the border, we need at least $(2k+2l-4)/2=k+l-2$ segments to cover all of them. 

Further, suppose that the set of segments is noncrossing. If there exists a border point for which the segment covering this point does not cover any other point from the border, then we need at least $\lceil 1+(2k+2l-5)/2\rceil=k+l-1$ segments. We call such a point a wasteful point. Thus if the topleft corner $A$ is wasteful, we are done. Otherwise, $A$ is covered by a segment $a$ which covers another point $A'$. As $a$ is a non-axis-aligned segment, $A'$ is not the topright nor the bottomleft corner and $A'$ is either in the bottom row or in the rightmost column (or both, if $A'$ is the bottomright corner). We consider the first case, as the second is analogous. Thus $A'$ is in the bottom row and then the segment $c$ covering the bottomleft corner $C$ cannot contain any other point from the border as otherwise it would be either axis-aligned or would intersect $a$. Thus $C$ is wasteful and we are done.
%Further, suppose that the set of segments is noncrossing. If for all corners the segment covering this corner does not cover any other point, then we need at least $4+(2k+2l-4-4)=k+l$ segments. Thus we can suppose that one corner, wlog. the topleft corner $A$ is covered by a segment $a$ which covers another point $A'$. As the segment is non-axis-aligned, $A'$ is not the topright nor the bottomleft corner. $A'$ is either in the bottom row or in the rightmost column (or both, in case $A'$ is the bottomright corner). We consider the first case, as the second is analogous. Thus $A'$ is in the bottom row and then the segment $c$ covering the bottomleft corner $C$ cannot contain any other point from the border as otherwise it would be either axis-aligned or would intersect $a$. Now if the segment $d$ covering the bottomright corner $D$ does not cover any other point then we have two segments that cover only one point and thus at least $2+(2k+2l-4-2)=k+l-1$ segments altogether. If $d$ covers any other point $D'$ (note that $a=d$ if $A'=D')$, that must be in the top row. Similarly as before, then the segment $b$ covering the topright corner $B$ cannot contain any other point. Again, we have two segments that cover only one point and thus at least $k+l-1$ segments altogether.
\end{proof}

\begin{lemma}\label{lemma:bound}
Every tree $T$ covering $V(n,m)$ contains
\begin{itemize}
\item[] at least $2\min(n,m)-2$ edges,
\item[] at least $2\min(n,m)-1$ edges, if $n,m\le 2$,
\item[] at least $2\min(n,m)-1$ edges, if $n\ne m$,
\item[] at least $2\min(n,m)-1=2n-1$ edges, if $n=m\ge 3$ and the tree $T$ is noncrossing.
\end{itemize}
\end{lemma}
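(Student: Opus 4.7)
The plan is to classify the edges of $T$ by orientation, apply Lemma~\ref{lemma:nonaligned} to the portion of the grid that can only be covered by non-axis-aligned edges, and combine this with a tree-connectivity argument in the complementary case. Assume without loss of generality that $n\ge m$. Partition the edges of $T$ into $H$ (axis-aligned edges on a grid row), $V$ (axis-aligned edges on a grid column), and $D$ (non-axis-aligned edges), with sizes $\alpha,\beta,\gamma$; any axis-aligned edge off every grid line covers no grid point and only increases $e(T)$, so the bound $e(T)\ge\alpha+\beta+\gamma$ always holds. Let $R$ (resp.\ $C$) be the set of rows (resp.\ columns) containing an $H$- (resp.\ $V$-)edge, and write $r=|R|$, $c=|C|$; clearly $\alpha\ge r$ and $\beta\ge c$.

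The key observation is that the $(m-r)(n-c)$ grid points lying in a row outside $R$ and a column outside $C$ can only be covered by $D$-edges, and they form a (possibly non-unit) orthogonal grid $V'(m-r,n-c)$. Applying Lemma~\ref{lemma:nonaligned} yields $\gamma\ge(m-r)+(n-c)-2$ in general, improving to $\gamma\ge(m-r)+(n-c)-1$ whenever $T$ is noncrossing or $\min(m-r,n-c)=1$. Hence, when $r<m$ and $c<n$,
\[
e(T)\;\ge\;r+c+\gamma\;\ge\;m+n-2,
\]
improving to $m+n-1$ under the stronger hypothesis. In the complementary case $r=m$ (the case $c=n$ is symmetric), $\alpha\ge m$, and since $H$-edges in distinct rows share no vertex, the horizontal subforest of $T$ has at least $m$ connected components; tree-connectivity of $T$ then forces at least $m-1$ additional non-$H$-edges to link them, giving $e(T)\ge m+(m-1)=2m-1$.

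The first three bullets follow directly: the general bound $2m-2$ comes from $m+n-2\ge 2m-2$; if $n>m$ the first case improves to $m+n-2\ge 2m-1$ and the second case already gives $\ge 2m-1$; and if $T$ is noncrossing with $n=m\ge 3$, the first case improves to $m+n-1=2m-1$ via the noncrossing version of Lemma~\ref{lemma:nonaligned}, while the second case again gives $\ge 2m-1$. The second bullet is non-trivial only for $n=m=2$, where every sub-case of the above analysis delivers $e(T)\ge 3$ except the sub-case $r=c=0$ of the first case. There all edges of $T$ lie in $D$; a hypothetical tree with $e(T)=2$ would consist of three vertices and two edges sharing an endpoint, which is automatically noncrossing in the sense of Lemma~\ref{lemma:nonaligned}, so its noncrossing version applied to $V'(2,2)$ forces $\gamma\ge 3$, contradicting $e(T)=2$.

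The main obstacle I anticipate is pairing Lemma~\ref{lemma:nonaligned}---a statement about arbitrary segment covers that is insensitive to connectivity---with the tree structure of $T$. Tree-connectivity is essential to promote $\alpha\ge m$ to $e(T)\ge 2m-1$ in the complementary case $r=m$, and the elementary observation that a $2$-edge tree in the plane cannot cross itself transversally is needed to invoke the stronger noncrossing version of Lemma~\ref{lemma:nonaligned} in the small sub-case $n=m=2$ with $r=c=0$.
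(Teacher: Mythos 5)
Your proof is correct and follows essentially the same route as the paper: split according to whether every row (or column) carries an axis-parallel edge (handled by the matching/connectivity argument giving $2m-1$), and otherwise count the horizontal and vertical edges row-by-row and column-by-column and apply Lemma~\ref{lemma:nonaligned} to the residual non-unit sub-grid, with the noncrossing clause yielding the $n=m\ge 3$ bullet. The only difference is that you work out the $n,m\le 2$ bullet explicitly (via the $k=1$ or $l=1$ and noncrossing clauses of Lemma~\ref{lemma:nonaligned}), which the paper dismisses as an easy case analysis.
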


\begin{figure}
    \centering
    \includegraphics[height=4cm]{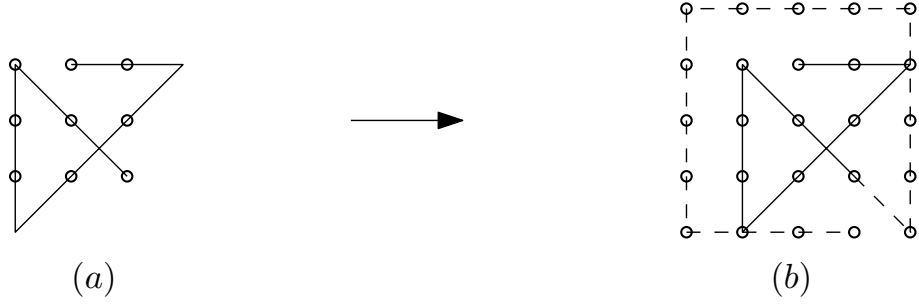}
   \caption{(self-crossing) covering path with $2n-2$ segments on an $n\times n$ grid for (a) $n=3$ and (b) $n=5$}
   \label{fig:spiral}
\end{figure}

\begin{proof}
The covering tree of $V(n,m)$ is denoted by $T$. 
It is an easy case analysis to check that if $n,m\le 2$ then every (possibly self-crossing) tree covering $V(n,m)$ contains at least $2\min(n,m)-1$ edges. 
First suppose that in all rows there exists a point which is covered by a horizontal segment of the tree (the point may be an endvertex of the tree).
As we have $m$ rows, we identified $m$ segments of the tree. As all these segments are parallel, they are independent in the tree, thus from the connectedness of the tree it follows that we have at least $m-1$ further edges, $2m-1$ altogether. 
Similarly, if in all columns there exists a point which is covered by a vertical segment of the tree, then we have at leat $2n-1$ edges altogether.
In both of these cases we have at least $2\min(n,m)-1$ edges, thus we can suppose that none of them holds.

Denote by $R_1$ the (nonempty) set of rows which do not contain a point which is covered by a horizontal segment and by $C_1$ the (nonempty) set of columns which do not contain a point which is covered by a vertical segment. Denote by $V'(k,l)$ the intersection of these rows and columns ($k=|C_1|,l=|R_1|$). This is a not necessarily unit grid, none of its points covered by axis-aligned segments. Thus we can apply Lemma \ref{lemma:nonaligned}, and we get that as $T$ also covers $V'(k,l)$, $T$ has at least $k+l-2$ non-axis-aligned segments. If $T$ is noncrossing then from the lemma we get that it contains at least $k+l-1$ non-axis-aligned segments. On the other hand, in each row not in $R_1$ $T$ has a horizontal segment, altogether $m-l$ horizontal segments. Similarly the columns not in $C_1$ contain $n-k$ vertical segments. Thus $T$ always has at least $k+l-2+m-l+n-k=n+m-2\ge 2\min(n,m)-2$ segments. Improving this, if $n\ne m$ then $T$ has at least $n+m-2\ge 2\min(n,m)-1$ segments. If $n=m$ and $T$ is noncrossing we again have at least $k+l-1+m-l+n-k=n+m-1\ge 2\min(n,m)-1$ segments. 
\end{proof}

Lemma \ref{lemma:constr} and Lemma \ref{lemma:bound} together imply Theorem \ref{thmpath} and Theorem \ref{thmtree}. 
What remains is to prove Claim \ref{claimtree}.

\begin{proof}[Proof of Claim \ref{claimtree}]
Recall that we want to prove, that the minimal number of segments of a tree covering $V(n,m)$ is $\min(n,m)+1$, except when $n=1$ or $m=1$ or $n=m=2$ in which cases it is $\min(n,m)$. A simple example of a tree covering $V(n,m)$ and having $\min(n,m)+1$ segments is a tree that (wlog. suppose $n\ge m$) contains the segment covering the leftmost row and for each column a segment that covers that column (see Figure \ref{fig:worm}(b)).

If $n=1$ or $m=1$ then $1$ segment is enough to cover $V(n,m)$. If $n,m\ge 2$ then we need at least $2$ segments to cover $V(n,m)$. If $n=m=2$, a covering tree with two segments consists of the two main diagonals, and one segment is trivially not enough to cover. From now on we suppose that we are not in any of these exceptional cases. It is easy to see that any segment can cover at most $\max(n,m)$ points. If at leasy one segment covers less than this many points then we have at least $nm/\max(n,m)+1=\min(n,m)+1$ segments. Thus we can suppose that all segments cover exactly $\max(n,m)$ points. It is easy to see that all of them is either axis-aligned or contained in one of the two main diagonal lines. We cannot cover only with segments in the main diagonal lines and we cannot cover only with one axis-aligned segment. Thus, as the tree is connected, there is an axis-aligned segment and a segment that intersects this segment, and these two segments must intersect in a point of $V(n,m)$.  The intersection point is covered by both of them, thus the two of them together only cover at most $2\max(n,m)-1$ points. From this it again follows that we have at least $\min(n,m)+1$ segments.
\end{proof}

\section{Open Problems}

In this paper we completely solved the planar case of this problem. As it was done for axis-aligned paths, it would also be interesting to consider covering paths (with no restriction on the direction of the segments) of higher dimensional grids, and of 3-dimensional grids in particular. If we want to cover an $n\times n\times n$ unit grid using an axis-aligned path, then we need $\frac{3}{2}n^2+O(n)$ segments \cite{BBD+08}. What happens if we allow the path to have segments of any direction? What happens if we relax the condition so that we want to cover with a tree (that has segments of any direction) instead of a path? What difference does it make if we allow or forbid crossings?

\paragraph{Acknowledgment.}
The author is grateful to A. Dumitrescu, D. P\'alv\" olgyi and G. Tardos for their helpful comments and observations.


\begin{thebibliography}{99}
%\itemsep 2pt

%\bibitem{Aloupis}
%G. Aloupis, A history of linear-time convex hull algorithms for simple
%polygons, \texttt{http://cgm.cs.mcgill.ca/~athens/cs601/}.
%
%\bibitem{AMP03} E. M. Arkin, J. S. B. Mitchell and C. D. Piatko,
%Minimum-link watchman tours,
%\emph{Inf. Proc. Lett.} \textbf{86} (2003), 203--207.

\bibitem {BBD+08}
S. Bereg, P. Bose, A. Dumitrescu, F. Hurtado and P. Valtr,
Traversing a set of points with a minimum number of turns,
\emph{Discrete Comput. Geom.} \textbf{41(4)} (2009), 513--532.

%\bibitem{BC06}
%H.~Br\"onnimann and T.~M.~Chan,
%Space-efficient algorithms for computing the convex hull
%of a simple polygonal line in linear time,
%\emph{Comput. Geom.} {\bf 34} (2006), 75–-82.
%
%\bibitem{Ch91}
%B.~Chazelle,
%Triangulating a simple polygon in linear time,
%\emph{Discrete Comput. Geom.} \textbf{6} (1991), 485--524.
%
%\bibitem{CK80}
%V. Chv\'atal and G. Klincsek,
%Finding largest convex subsets,
%\emph{Congressus Numerantium} \textbf{29} (1980), 453--460.

\bibitem {CM98} M. J. Collins and M. E. Moret,
Improved lower bounds for the link length of rectilinear spanning
paths in grids,
\emph{Inf. Proc. Lett.} \textbf{68} (1998), 317--319.

\bibitem {Co04} M. J. Collins,
Covering a set of points with a minimum number of turns,
\emph{Internat. J. Comput. Geom. Appl.} \textbf{14(1-2)} (2004), 105--114.

%\bibitem{DO11}
%E.D.~Demaine and J.~O'Rourke,
%Open problems from CCCG 2010,
%in \emph{Proc. 23rd Canadian Conf. Comput. Geom.},
%2011, Toronto, ON, pp. 153--156.

\bibitem{coveringpaths}
A. Dumitrescu, D. Gerbner, B. Keszegh, Cs. D. T\'oth, Covering Paths for Planar Point Sets, \emph{Discrete and Computational Geometry}, to appear
%
%\bibitem {ES35} P. Erd\H{o}s and G. Szekeres,
%A combinatorial problem in geometry,
%\emph{Compositio Mathematica} \textbf{2} (1935), 463--470.
%
%\bibitem {ES60}  P. Erd\H{o}s and G. Szekeres,
%On some extremum problems in elementary geometry,
%\emph{Annales Univ. Sci. Budapest} \textbf{3-4} (1960--1961), 53--62.
%
%\bibitem {EHS10} V. Estivill-Castro, A. Heednacram, and F. Suraweera,
%NP-completeness and FPT results for rectilinear covering problems,
%\emph{Journal of Universal Computer Science} \textbf{15} (2010), 622--652.
%
%\bibitem {FKMU10} R. Fulek, B. Keszegh, F. Mori\'{c} and I. Uljarevi\'{c},
%On polygons excluding point sets,
%\emph{Graphs and Combinatorics} (2012), DOI: 10.1007/s00373-012-1221-8.
%A preliminary version in \emph{Proc. 22nd Canadian
  %Conf. Comput. Geom.}, Winnipeg, MB, 2010, pp.~273--276.
%
%\bibitem {J12} M. Jiang,
%On covering points with minimum turns,
%\emph{Frontiers in Algorithmics and Algorithmic Aspects in Information
      %and Management - Joint International Conference (FAW-AAIM)},
      %vol.~7285 of LNCS, Springer, 2012, pp.~58--69.
%
\bibitem {KKM94} E. Kranakis, D. Krizanc and L. Meertens,
Link length of rectilinear Hamiltonian tours in grids,
\emph{Ars Combinatoria} \textbf{38} (1994), 177--192.

\bibitem{puzzlebook}A. Levitin, M. Levitin, \emph{Algorithmic Puzzles}, Oxford University Press, 2011.

\bibitem {Lo1914} S.~Loyd,
\emph{Cyclopedia of Puzzles},
The Lamb Publishing Company, 1914.

%\bibitem {Ma02} J. Matou\v{s}ek,
%\emph{Lectures on Discrete Geometry},
%Springer, New York, 2002.
%
%\bibitem{Me87}
%A.~Melkman, On-line construction of the convex hull of a simple polygon,
%\emph{Inf. Proc. Lett.} \textbf{25} (1) (1987), 11--12.
%
%\bibitem{Mi00}
%J. S. B. Mitchell,
%Geometric shortest paths and network optimization, in
%\emph{Handbook of Computational Geometry (J-R.~Sack and J.~Urrutia,
%eds.)}, Chap.~15, Elsevier, 2000, pp.~633--701.
%
%\bibitem{Mo10}
%F. Mori\'c,
%Separating and covering points in the plane,
%communication at the Open Problem Session of
%\emph{22nd Canadian Conf. Comput. Geom.},
%Winnipeg, MB, 2010.
%
\bibitem{Mo11}
F. Mori\'c,
 Covering points by a polygonal line,
in \emph{Gremo's Workshop on Open Problems}, Problem Booklet,
Wergenstein, Switzerland, 2011.

\bibitem{dompc}
D. P\'alv\" olgyi, personal communication (2013)
%
%\bibitem {PS85}
%F. P. Preparata and M. I. Shamos,
%\emph{Computational Geometry: An Introduction},
%Springer, New York, 1985.
%
%\bibitem {SW01}
%C. Stein and D. P. Wagner,
%Approximation algorithms for the minimum bends traveling salesman problem,
%in \emph{ICALP}, vol.~2081 of LNCS, 2001, pp.~406-421.
%
%\bibitem {We11} E. Welzl, communication at the
%\emph{9th Gremo's Workshop on Open Problems},
%Wergenstein, Switzerland, 2011.

\end{thebibliography}
\end{document}